\documentclass[preprint,12pt]{elsarticle}

\usepackage{graphicx}
\usepackage{amssymb,stmaryrd}
\usepackage{amsthm}
\usepackage{amsmath}
\usepackage{color}
\usepackage{comment}
\usepackage{calligra}
\DeclareMathAlphabet{\mathcalligra}{T1}{calligra}{m}{n}
\DeclareFontShape{T1}{calligra}{m}{n}{<->s*[2.2]callig15}{}

\journal{Discrete Mathematics}

\newcommand{\ml}{l\kern-0.55mm\char39\kern-0.3mm}

\newtheorem{theorem}{Theorem}
\newtheorem{corollary}{Corollary}
\newtheorem{proposition}{Proposition}
\newtheorem{problem}{Problem}

\begin{document}

\begin{frontmatter}


\title{On harmonic centers of graphs}



\author[TM]{Tomáš Madaras\corref{TMc}}
\ead{tomas.madaras@upjs.sk}
\cortext[TMc]{corresponding author}
\author[MP]{Matúš Paralič}
\ead{matus.paralic@student.upjs.sk}

\address[TM,MP]{P.J. Šafárik University in Košice, Slovakia}

\begin{abstract}
The harmonic centrality of a vertex $v$ in a graph $G = (V,E)$ is the sum of reciprocals of distances of vertices of $G$ from $v$. The vertices of $G$ which have the maximum (minimum) harmonic centrality form the harmonic center (or periphery, resp.) of $G$. We study harmonic centers of graphs and their localization in graph blocks, presenting sufficient conditions for graphs (in terms of diameter or number of edges) to have those centers contained in a single block; in addition,  we show that each connected graph is the harmonic center as well as harmonic periphery of some graphs.
\end{abstract}

\begin{keyword}
Centrality index \sep Graph center \sep Harmonic centrality 

\MSC[2020] 05C09, 05C12
\end{keyword}

\end{frontmatter}


\section{Introduction}
\label{S:1}

Throughout this paper, we consider (unless stated otherwise) connected graphs without loops or multiple edges. Given a graph $G = (V,E)$, the distance $d(u,v)$ of vertices $u,v \in V$ is the length of a shortest $u-v$-path. The {\em harmonic centrality} of a vertex $x$ of $G$ (\cite{MarchioriLatora}) is the sum $H_G(x) = \displaystyle\sum\limits_{{u \in V(G)} \atop {u \neq x}} \dfrac{1}{d(u,x)}$. The {\em harmonic center} ${\mathcal C}_H(G)$  of $G$ is the subgraph of $G$ induced by the vertices of $G$ that have the maximum harmonic centrality; conversely, the vertices of $G$ that have the minimum harmonic centrality induce in $G$ the {\em harmonic periphery} ${\mathcal P}_H(G)$. If ${\mathcal C}_H(G) \cong G$, then $G$ is called {\em harmonic-uniform}. We say that the harmonic center of $G$ is {\em delocalized} if there exist two distinct vertices of ${\mathcal C}_H(G)$ that belong to two different blocks of $G$.

The harmonic centrality is less-known member of the family of so called centrality indices, which are nonnegative valued functions on vertices of graphs that are invariant under graph isomorphism and reflect a subjective perception of central vertices in graphs (in the sense that vertices which are perceived as more central -- as the central vertex of a star compared to its pendant vertices -- have higher centrality values). In general, the closeness centrality and the betweenness centrality are considered as the most applied and studied centrality indices. Recall that the closeness centrality of a vertex $x$ in $G$ is the reciprocal $\dfrac{1}{\sum\limits_{u \in V(G)} d(u,x)}$ of the sum of all distances from $x$, and the betweenness centrality of $x$ is   the sum of relative numbers of shortest paths that pass through $x$ (formally, it is the sum $\displaystyle\sum\limits_{\{u,v\} \in {{V(G)\setminus \{x\}}\choose 2}} \dfrac{\sigma_{u,v}(x)}{\sigma_{u,v}}$ where $\sigma_{u,v}$ is the number of shortest $u-v$-paths in $G$, and $\sigma_{u,v}(x)$ is the number of those shortest $u-v$-paths that contain $x$ as an internal vertex). While harmonic centrality has also found various applications in social network analysis (for recent work, see, for example,  \cite{LiuWanZhang,YangMadsen,SartoriEtAl,GerlachBlumenthal,NiyazmandSheikholeslamKamali}),  its local graph-theoretic properties seem to be not yet explored in detail, except of the  connection to well-known Harary index, a global topological invariant defined as $\frac{1}{2}\displaystyle\sum\limits_{u \in V(G)} H_G(u)$ (see the monograph \cite{HararyIndex} for further results). 

In this paper, we focus on properties of harmonic centers resp. peripheries of graphs. We first show that every graph can appear as harmonic center (and, similarly, as harmonic periphery) of some graph, which aligns with analogous results for eccentricity-based center and periphery (\cite{Hedetniemi},\cite{BielakSyslo}), for median center (\cite{Slater}), detour center (\cite{ChartrandEtAl}) or betweenness center (see the recent work \cite{MadarasParalic}). On the other hand, the harmonic centrality surprisingly differs from other centrality indices regarding the centrality-uniform graphs (see \cite{HararyNorman, Truszczynski} and \cite{GagoHurajovaMadaras2}): we describe the construction of an infinite family of harmonic-uniform (nonregular) graphs with cut-vertices. Yet such graphs seem to be rare and somehow structurally constrained -- those of diameter at most 4 may contain at most one cut-vertex, and become 2-connected if their diameter is at most 3. The latter result follows from more general statement on localizations of harmonic center: for graphs of diameter at most 3, their harmonic center lies within a single block. However, for larger diameters, harmonic center may be  severely scattered even in trees: we describe the construction of infinite family of trees such that vertices of their harmonic centers are pairwise distant and do not lie on a single path; this is in sharp contrast with betweenness centers of trees (see \cite{HararyOstrand, MadarasParalic}: they are contained within a path) as well as with classical eccentricity-based or closeness centers (\cite{Jordan,Truszczynski}).

\section{Results}

We start our investigation by presenting results of realizability of graphs as harmonic centers and peripheries.

\begin{theorem}\label{thm:every-graph-harmonic-center}
For every $n$-vertex graph $G$, there exists a connected graph $F$ with $|V(F)| \leq 3n^2+3n+6$ such that ${\mathcal{C}_H(F)}\cong G$.
\end{theorem}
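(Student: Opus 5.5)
The plan is to embed $G$ as an induced subgraph of a graph $F$ with two properties. First, every vertex of $G$ sees exactly the same multiset of distances in $F$ (or at least the same harmonic sum). Second, every added vertex is strictly worse. The budget $3n^2+3n+6$ suggests $O(n)$ new vertices per vertex of $G$ plus a constant-size core, so that is the kind of gadget I will build.

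\emph{Step 1 (hub).} Add a hub vertex $w$ adjacent to every vertex of $G$. Then any two vertices of $G$ are at distance $1$ or $2$ in $F$, and every $v\in V(G)$ satisfies
\[
\sum_{u\in V(G)\setminus\{v\}}\frac{1}{d(u,v)}=\deg_G(v)+\frac{n-1-\deg_G(v)}{2}.
\]
This sum depends on $v$ only through $\deg_G(v)$.

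\emph{Step 2 (equalizing degrees).} For each $v\in V(G)$ and each non-neighbour $u$ of $v$, add a private vertex $x_{v,u}$. Join it to $v$ and to a second hub $w'$, where $w'$ is adjacent to $w$. After this, every $v$ has exactly $n-1$ neighbours among $V(G)\cup\{x_{v,\cdot}\}$, plus $w$. The point of routing the $x$'s through $w'$, rather than joining them to $u$, is that the distances from $v$ to the other $x_{v',u'}$ become uniform: each $x_{v',u'}$ is at distance $2$ from $v$ when $v'=v$ is false and $v'\sim v$ or via $w'$. I would check this carefully and, if needed, subdivide or add a further layer so that all foreign gadget vertices are at a common distance (say $3$) from every vertex of $G$, while all vertices of $G$ are at distance $2$ from $w'$.

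\emph{Step 3 (domination).} Attach to $w$ a pendant structure: about $3n$ leaves, or paths of length $2$. Its purpose is to make the vertices of $G$ strictly beat $w$, $w'$ and all gadget vertices. The difficulty is that $w$ is adjacent to all of $G$ and so tends to have the largest centrality. I would therefore attach the bulk of the "mass" not to $w$ but symmetrically to the vertices of $G$. Concretely, give each $v$ the same number $k\approx 3n$ of private pendant leaves, so each $v$ gains $k$ from its own leaves and a uniform $k\cdot n'$ contribution at distance $3$ from the others' leaves. Meanwhile $w$ receives only $nk/2$, and this is dominated because $k$ is large relative to the $n$ neighbours $w$ gains.

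Counting vertices gives $n$, plus at most $n(n-1)$ $x$'s, plus $kn\le 3n^2$ leaves, plus $O(1)$ hubs. This should fit the stated bound after tuning $k$, possibly with $k=2n$ and the constant $6$ absorbing the hubs.

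The main obstacle is Step 2/3: guaranteeing that all distances from $v$ to gadget vertices attached to \emph{other} vertices of $G$ do not depend on whether those vertices are adjacent to $v$ in $G$. Adjacency in $G$ shortens paths, so leaves of a neighbour are at distance $2$ while leaves of a non-neighbour are at distance $3$. To deal with this, I would compensate exactly. The leaf contributions from neighbours differ by $k(\tfrac12-\tfrac13)\deg_G(v)$, so I would choose the number of private leaves of $v$ to be $k_v=c-\lambda\deg_G(v)$ with a suitable rational $\lambda$ (scaling everything by $6$ to make it integral). I would then solve the resulting linear identity in $\deg_G(v)$ and $\sum_{u\sim v}k_u$. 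If this linear-algebra balancing fails because $\sum_{u\sim v}\deg_G(u)$ appears, I would instead isolate the leaves by hanging them on $w$-side subdivision vertices, so that they are equidistant from all of $V(G)$.

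Finally, I would verify strict inequalities for $w$, $w'$, the $x$'s and the leaves. Each of these vertices is at distance at least one more from most of the mass, which should give the strict gap.
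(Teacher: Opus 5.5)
There is a genuine gap. Neither the equalization in Step 2 nor the domination in Step 3 works as written, and the repairs you sketch do not close either one.

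\textbf{Step 2.} Routing the gadgets through a second hub $w'$ is exactly what breaks uniformity. For $v'\neq v$ you get $d(v,x_{v',u'})=2$ if $vv'\in E(G)$, but $d(v,x_{v',u'})=3$ otherwise. So $H(v)$ contains the term $\frac16\sum_{v'\sim v}(n-1-\deg_G(v'))$, which is not a function of $\deg_G(v)$. The easy repair is to join each $x_{v,u}$ to $v$ and to $w$ itself; this is essentially the paper's device, since its gadgets $a_v,b_v$ and $Y_i$ are all adjacent to the hub $x$. Then $w$ is universal, every vertex lies within distance $2$ of each $v\in V(G)$, and so $H(v)=\frac{N-1+\deg(v)}{2}$, where $N$ is the current order. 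Your $n-1-\deg_G(v)$ private neighbours then give $\deg(v)=n$ for every $v$. Repaired this way, your one-shot equalization is in fact simpler than the paper's parity fix followed by iterative balancing.

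\textbf{Step 3.} This is the real failure, for two reasons.
\begin{itemize}
\item Private leaves recreate the same asymmetry: leaves of $v'$ are at distance $2$ or $3$ from $v$ according to whether $vv'\in E(G)$. Your compensation $k_v=c-\lambda\deg_G(v)$ is blocked by the term $\sum_{u\sim v}k_u$, as you suspect.
\item The leaves do not make $V(G)$ dominate. All $kn$ leaves are at distance $2$ from $w$, giving it $\frac{kn}{2}$, while $v$ gets $k+\frac k2\deg_G(v)+\frac k3(n-1-\deg_G(v))$. The difference is $k\bigl(\frac12-\frac{n-1-\deg_G(v)}{6}\bigr)$, which is negative once $v$ has four non-neighbours (e.g.\ $G$ edgeless, $n\ge 5$). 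Increasing $k$ only widens $w$'s lead. Hanging the leaves on the $w$ side makes them equidistant from $V(G)$, but even closer to $w$.
\end{itemize}

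\textbf{The missing idea.} Add a set $S$ of $m$ new vertices, each adjacent to every vertex of $V(G)$ and to nothing else. Then:
\begin{itemize}
\item every $v\in V(G)$ gains exactly $m$;
\item old distances are unchanged, since a detour through $S$ is never shorter than the corresponding one through the hub;
\item each old auxiliary vertex sees $S$ only at distance at least $2$;
\item each $s\in S$ sees the other vertices of $S$ at distance $2$ and the $k$ old auxiliary vertices at distance at least $2$.
\end{itemize}
With $m=2(n+k)$, every vertex outside $V(G)$ has $H\le m-1$, while every vertex of $V(G)$ has $H\ge m$. With the repaired Step 2 you have $k\le 1+n(n-1)$, so $|V(F)|=3(n+k)\le 3n^2+3$, within the required bound.
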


\begin{proof}

We describe, in several steps, the construction of $F$ from $G = (V,E)$ by gradually adding new auxiliary vertices; for the graph constructed in particular step, we calculate, in detail, harmonic centralities of its vertices. 

\medskip\noindent
{\bf Step 1. } Let $F_0$ be the join $G \vee K_1$ of $G$ with a new auxiliary vertex $x$. Clearly, $F_0$ is connected and its diameter is at most 2; hence, for a vertex $v \in V$,

$$
H_{F_0}(v) = 1 + \deg_G(v) + \frac{n-1-\deg_G(v)}{2} = \frac{n+1+\deg_G(v)}{2}.
$$
Note that $H_{F_0}(v)$ has integer value if and only if $n-1-\deg_G(v)$ is even.

\medskip\noindent
{\bf Step 2. } Let $P = \left\{ v\in V: n-1-\deg_G(v)\equiv 1\pmod 2 \right\}$, $p = |P|$.
For every vertex $v\in P$, add two new vertices  $a_v,b_v$
and new edges $va_v, a_vb_v, xa_v,$ $ xb_v$. Denote the resulting graph by $F_1$; note that, in this step, at most $2n$ auxiliary vertices were added.

For a fixed vertex $v\in P$, $d_{F_1}(v,a_v)=1,
d_{F_1}(v,b_v)=2$, hence $a_v,b_v$ contribute $1 + \frac{1}{2} = \frac{3}{2}$ to $H_{F_1}(v)$. For a vertex $w\in V\setminus\{v\}$, we have $d_{F_1}(w,a_v)=d_{F_1}(w,b_v)=2$
(there exist paths $w x a_v$ and $w x b_v$), so $a_v,b_v$ contribute $\frac12+\frac12=1$ to $H_{F_1}(w)$.

Observe that, when constructing $F_1$, the addition of new vertices does not change the distance between pairs of vertices from $V \cup \{x\}$. Hence, we conclude that, for $w \not \in P$, $H_{F_1}(w) = H_{F_0}(w) + p$, while, for 
$w\in P$, $
H_{F_1}(w)
=
H_{F_0}(w)+\frac32+(p-1)
=
H_{F_0}(w)+p+\frac12.
$. Also, $H_{F_0}(w) \in \mathbb{Z}$ for $w \not\in P$ and $H_{F_0}(w) \in \mathbb{Q}\setminus\mathbb{Z}$ for $w \in P$; thus, $H_{F_1}(v)$ is integer for every $v \in V$. Note also that, in $F_1$, the difference of maximum and minimum harmonic centrality of vertices from $V$ is at most $ \left \lfloor \frac{n-1}{2} \right \rfloor = R$.

\medskip\noindent
{\bf Step 3.} Now, we aim to construct a sequence $Q_0, Q_1, \dots, Q_t$ of graphs such that 
\begin{itemize}
    \item $Q_0 = F_1$, 
    \item for every $i = 0,\dots, t$, the values of $H_{Q_i}(v)$ are integers, and
    \item for every $v \in V$, $H_{Q_t}(v) = C$.
\end{itemize}

Assume that, for some $i = 0,\dots, t-1$, there are two vertices of $Q_i$ with different harmonic centralities. Put
$M_i = \max\limits_{u \in V} H_{Q_i}(u)$, find a vertex $v_i \in V$ such that $H_{Q_i}(v_i) = \min\limits_{u \in V} H_{Q_i}(u) = m_i$ and set $\Delta_i = M_i - m_i$ (by integrality of values of harmonic centralities of vertices from $V$ in $Q_i$, $\Delta_i$ are integers); note that $\Delta_i \leq R$. The graph $Q_{i+1}$ is constructed in the following way: take the set $Y_i = \{y^i_1,\dots, y^i_{2\Delta_i}\}$ of $2\Delta_i$ new vertices, and connect each of them to $x$ and $v_i$.

For a vertex $y\in Y_i$, $d_{Q_{i+1}}(v_i,y)=1$, and for $
w\in V\setminus\{v_i\}$, $d_{Q_{i+1}}(w,y)=2$ (due to the path $wxy$). Note also that the addition of vertices of $Y_i$ to $Q_i$ preserves the distances between pairs of other vertices. Therefore,
$H_{Q_{i+1}}(v_i)
=
H_{Q_i}(v_i)+2\Delta_i
=
H_{Q_i}(v_i)
+2\bigl(M_i-m_i\bigr)=
M_i+\Delta_i$.

Let $u \in V$ be a vertex of $Q_i$ with maximum harmonic centrality. Then $H_{Q_{i+1}}(u) = M_i + \Delta_i$ and, for $w \in V \setminus \{v_i\}$, $H_{Q_{i+1}}(w) = H_{Q_i}(w) + \Delta_i \leq M_i + \Delta_i$. So, in $Q_{i+1}$, $v_i$ has the maximum harmonic centrality while all vertices that had maximum harmonic centrality in $Q_i$ have maximum harmonic centrality also in $Q_{i+1}$.

At each step, the number of vertices of $V$ attaining the maximum harmonic centrality strictly increases. Hence, after at most $n-1$ steps, all vertices of $V$ have the same harmonic centrality. Therefore, there exists an integer $j \in \{0,\dots,n-1\}$ and a positive integer $C$ such that $F^* = Q_j$ and $H_{F^*}(v) = C$ for every vertex $v \in V$. Observe also that the number of auxiliary vertices added in Step 3 is at most $2(n-1)R = 2(n-1)\left \lfloor \frac{n-1}{2} \right \rfloor \leq (n-1)^2$.

\medskip\noindent
{\bf Step 4. } Put $k = |V(F^*) \setminus V|$ and $m = 2(n+k)$. Note that $k \geq 1$ (since $x \not \in V$) and $k \leq 1 + 2n + (n-1)^2 = n^2 +2$.

 Let $F$ be the graph obtained from $F^*$ as follows: take a set $S = \{s_1,\dots,$ $s_m\}$ of $m$ new vertices, and join every vertex of $S$ to every vertex of $V$. Since, in this step, $2(n+k)$ auxiliary vertices were added, we get that 

 $$
|V(F)| = (n+k) + 2(n+k) = 3(n+k) \leq 3(n+n^2+2) = 3n^2+3n+6.
 $$

Observe that the addition of $S$ to $F^*$ does not change the distance between vertices of $V$ in $F^*$. We then obtain

$$H_F(v) = H_{F^*}(v)+m = C+m \geq m.$$

We finish the proof by showing that the harmonic centrality of any vertex outside of $V$ is strictly less than $m$; since, in steps 1-4, no edges were added to join the vertices of $V$, $F[V] \cong {\mathcal C}_H(F)$.

Let $w \in V(F^*) \setminus V$. All other $n+k-1$ vertices of $F^*$ contribute at most 1 to the harmonic centrality of $w$. Since no vertex of $V(F^*) \setminus V$ is adjacent to a vertex of $S$, the distance of $w$ from each member of $S$ is at least 2. Hence, the contribution from $S$ is at most $m\cdot \frac{1}{2} = n+k$. Therefore,
$H_F(w) \leq (n+k-1) + (n+k) = 2(n+k) -1 = m-1$.

Let $s \in S$. In $F$, every vertex from $V$ is adjacent to $s$, so they contribute, in total, $n$ to the harmonic centrality of $s$. Every other vertex from $S$ has distance 2 from $s$, and their contribution is $\frac{m-1}{2}$. None of $k$ auxiliary vertices is adjacent to $s$, so their contribution is at most $k \cdot \frac{1}{2}$. Therefore,

$$
H_F(s) \leq n + \frac{m-1}{2} + \frac{k}{2} = n+\frac{2(n+k)-1}{2}+\frac{k}{2} =$$
$$2n + \frac{3k}{2} - \frac{1}{2} \leq 2n+2k-1 = m-1.
$$

\end{proof}


\begin{theorem}\label{thm:every-graph-harmonic-periphery}
For every $n$-vertex graph $G=(V,E)$, there exists a connected graph $F$ with $|V(F)| \leq 2n^2+3n+4$ such that ${\mathcal{P}_H(F)}\cong G$.
\end{theorem}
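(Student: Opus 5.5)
The plan is to reuse the equalizing construction from the proof of Theorem~\ref{thm:every-graph-harmonic-center} and then change only the final step. Instead of lifting $V$ above everything else, the last step should push $V$ below everything else.

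\textbf{Step A (reuse Steps 1--3).} Starting from $G$, I build $F^*$ exactly as before: add the apex $x$, then the gadgets $a_v,b_v$ for $v\in P$, then the sets $Y_i$.
\begin{itemize}
\item This yields a graph in which every $v\in V$ has the same integer harmonic centrality $C$.
\item $G=F^*[V]$, since no edges inside $V$ were added.
\item There are $k=|V(F^*)\setminus V|\le n^2+2$ auxiliary vertices, all adjacent to $x$.
\end{itemize}
In particular, any two vertices of $F^*$ are at distance at most $2$ apart, via $x$ or directly.

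\textbf{Step B (new final step).} I attach a set $S$ of $m$ new vertices, inducing a clique, and join every vertex of $S$ to every auxiliary vertex of $V(F^*)\setminus V$, but to no vertex of $V$. The target count $2n^2+3n+4=3n+2(n^2+2)$ suggests $|V(F)|=3n+2k$, so I would take $m=2n+k$.

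First I check that distances inside $F^*$ are unchanged. Every pair was already at distance $\le 2$, and any path through $S$ between two vertices of $F^*$ has length at least $2$. A path from $v\in V$ through $S$ must enter $S$ from an auxiliary vertex, so it is longer than the route via $x$.

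Hence each $v\in V$ is at distance exactly $2$ from all of $S$, via $x$, and
\[
H_F(v)=C+\tfrac{m}{2}.
\]
Each auxiliary vertex $w$ is adjacent to all of $S$, so $H_F(w)\ge m+1$, the extra $1$ coming from $x$ or from $w=x$ itself. Each $s\in S$ sees $m-1$ clique neighbours and $k$ auxiliary neighbours, and is at distance $2$ from $V$, so
\[
H_F(s)\ge m-1+k+\tfrac n2 .
\]
Thus $V$ is exactly the harmonic periphery, and ${\mathcal P}_H(F)\cong G$, as soon as $C+\tfrac m2<m+1$, that is, $m>2C-2$.

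\textbf{Main obstacle: controlling $C$.} The trivial bound $C\le n+k-1$ is too weak for $m=2n+k$. I would first try to bound $C$ more sharply by tracking the construction. Each $v\in V$ is adjacent to $x$, to at most $n-1$ vertices of $V$, and to its own gadget vertex $a_v$ and its own $Y_i$'s. Every other auxiliary vertex is at distance $2$ from $v$. So $C$ is roughly $\tfrac{n}{2}+\tfrac{k}{2}$ plus the number of $Y$-vertices attached at the last vertex to be lifted. If that gives $2C<m+2$, we are done.

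If it does not, I would make a fallback adjustment. One option is to let $S$ also be adjacent to $x$ only, or to enlarge $m$ slightly. The other is to redo Step 3 so that the equalization raises all vertices only to a common value bounded by about $n/2+k/2$. In either case I would recheck the vertex count against $2n^2+3n+4$.

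The inequality $m>2C-2$ is the delicate point. The remaining comparisons ($H_F(w)$ and $H_F(s)$ against $H_F(v)$) follow routinely from the distance-$2$ structure.
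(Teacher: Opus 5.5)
Your construction is the paper's own: Steps 1--3, then a clique $S$ joined to every auxiliary vertex and to no vertex of $V$. The only difference is the size of $S$. The paper takes $m=2C+1$ and separately bounds $C\le 2n+(n-1)\lfloor\frac{n-1}{2}\rfloor\le\frac{(n+1)^2}{2}$. You instead fix $m=2n+k$, so that $|V(F)|=3n+2k\le 2n^2+3n+4$ is automatic.

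With that choice, however, everything rests on the inequality $2C<m+2$, and you leave exactly this step open (``if that gives $2C<m+2$, we are done; if it does not, \dots''). This is a genuine gap. The estimate you sketch does not close it. Taken at face value, it adds the full number of $Y$-vertices at a lifted vertex, which can be of order $n$: for $G=K_{1,n-1}$ with $n$ even, every leaf receives $n-2$ of them. Adding that many to $\frac n2+\frac k2$ breaks $2C<2n+k+2$ for larger $n$. Your fallbacks are not worked out, and enlarging $m$ would forfeit your vertex count.

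The missing observation is short. Since $F^*$ has diameter at most $2$ (as you note), $H_{F^*}(v)=\frac{n+k-1+\deg_{F^*}(v)}{2}$. So all vertices of $V$ share a common degree $D$ in $F^*$, and $2C=n+k-1+D$.

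Now evaluate $D$ at a vertex $u$ attaining the maximum in $Q_0$. Such a vertex stays maximal in every $Q_i$, so it is never chosen as $v_i$ and is adjacent to no vertex of any $Y_i$. Hence $D=1+\deg_G(u)+\varepsilon_u$, where $\varepsilon_u=1$ if $u\in P$ and $0$ otherwise. Since $\deg_G(u)=n-1$ forces $u\notin P$, we get $D\le n$.

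Therefore $2C\le 2n+k-1<m+2$. This gives $H_F(v)<H_F(w)$ for every auxiliary $w$, and also $C+\frac m2<m-1+k+\frac n2=H_F(s)$ for $s\in S$. With that, your argument, including the count $3n+2k\le 2n^2+3n+4$, is complete.

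In fact the step is not delicate at all. Since $F$ itself has diameter $2$, $\mathcal{P}_H(F)$ is simply induced by the minimum-degree vertices of $F$. So already $m=n+1$ would suffice.
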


\begin{proof}
Construct the graph $F^*$ using the first three steps from the proof of Theorem \ref{thm:every-graph-harmonic-center}.  Recall that $G$ is the induced subgraph of $F^*$ and there exists $C$ such that, for every vertex $v \in V$, $H_{F^*}(v)=C$. In addition, every auxiliary vertex of $F^*$ distinct from $x$ is adjacent to $x$, and the number of vertices of $F^*$ is at most $n + n^2 + 2$. Taking into account  the fact that the maximum harmonic centrality in the graph  $Q_0 = F_1$ is at most $2n$, and, during the construction of the sequence $Q_0, Q_1, \dots, Q_t$, the actual increase of maximum harmonic centrality is at most $R$, we obtain the estimate

$$
C \leq 2n + (n-1)\left\lfloor\frac{n-1}{2} \right \rfloor \leq 2n + \frac{(n-1)^2}{2} = \frac{(n+1)^2}{2}.
$$

Let $W=V(F^*)\setminus V$, $k=|W|$ ($k \geq 1$ since $x \in W$). Put $m=2C+1$ and construct the graph $F$ taking the complete graph $K_m$ on the set $S=\{s_1,\dots,s_m\}$ and connecting each vertex from $S$ to each vertex of $W$. Observe that $|V(F)| = |V(F^*)|+m = |V(F^*)| + 2C+1 \leq (n^2+n+2) + (n^2 + 2n + 2) = 2n^2 + 3n + 4.$

Consider first the distance between $v \in V$ and $u \in V(F^*)$. Each $u-v$-path $P$ that visits $S$ enters $S$ from a vertex $w \in W$ and exits $S$ followed with a vertex $w' \in W$ (possibly, $w$ or $w'$ can be equal to $x$). The part of $P$ between $w$ and $w'$ has length at least 2, and can  be replaced by the subpath of length at most 2 through  $x$, yielding a $u-v$-path in $F^*$ which is not longer than $P$. Therefore, $d_F(v,u) = d_{F^*}(v,u)$. Next, the vertices $v \in V$ and $s \in S$ are not adjacent, but there exists the path $vxs$, so $d_F(v,s) = 2$. We thus obtain that, for $v \in V$,

$$
H_F(v)
=H_{F^*}(v)+\frac{m}{2}
=C+\frac{2C+1}{2}
=2C+\frac12.
$$

For $w\in W$,
 
$$
H_F(w)\geq m=2C+1>2C+\frac12,
$$

and for $s\in S$,

$$
H_F(s)
=(m-1)+k+\frac{n}{2}
=2C+k+\frac{n}{2}>2C+\frac12.
$$

We conclude that the vertices of $V$ have the same harmonic centrality which is smaller than harmonic centrality of any other vertex of $F$; hence $V$ induces the harmonic periphery in $F$.

\end{proof}


It is possible that Theorem \ref{thm:every-graph-harmonic-center} might be strengthened in a way that $F$ is highly-connected or, perhaps, even regular. Nevertheless, the graphs of specific graph families may have more constrained harmonic centers or peripheries. For example, it is easy to show that, in every tree of order at least 3, every pendant vertex has smaller harmonic centrality than its neighbor; hence, the harmonic center of a tree is contained in leaf-pruned part (the "trunk").
On the other hand, there are trees whose periphery contains none of its pendant vertices: take a 9-vertex path $v_1\dots v_5\dots v_9$, and add 37 pendant vertices to both $v_1$ and $v_9$. It follows by direct calculation that the minimum harmonic centrality is $\frac{569}{30} \doteq 18.9667$ and is attained at $v_5$, while every pendant vertex has harmonic centrality equal to $\frac{61813}{2520} \doteq 24.529$. 


\medskip
We turn now our attention to the properties of harmonic centers vs. block structure of graphs, and to harmonic-uniform graphs. We first show that, in general, harmonic centers can be severely delocalized even in trees:

\begin{theorem}
For all integers $k\geq 3, d \geq 1$, there exists a tree $T_{k,d}$ such that ${\mathcal C}_H(T_{k,d})$ is not contained within a path of $T_{k,d}$, and for any two vertices $x,y \in {\mathcal C}_H(T_{k,d})$, $d(x,y) \geq d$. 
\end{theorem}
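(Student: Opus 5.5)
The plan is to use a spider-like tree with $k$ symmetric legs whose ends are heavy, and to make the harmonic center be exactly the $k$ "hub" vertices at the ends of the legs, one per leg.

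\textbf{Construction.} Fix $k\ge 3$ and $d\ge 1$. Take a central vertex $c$ and $k$ internally disjoint paths $c=u^i_0,u^i_1,\dots,u^i_L$, $i=1,\dots,k$, with $L\ge \lceil d/2\rceil$ to be fixed later. At each end $u^i_L$ attach $N$ pendant vertices, where $N$ is a large parameter. The automorphism group permutes the legs transitively, so $H(u^1_j)=\dots=H(u^k_j)$ for every $j$. If the center is $\{u^1_L,\dots,u^k_L\}$, then the center vertices are pairwise at distance $2L\ge d$. Moreover, no path contains three leaf-ends of distinct legs, because $k\ge 3$ and a path through $c$ uses at most two legs. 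This proves both claims.

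\textbf{Key step: the maximum is attained exactly at the $u^i_L$.} Since the tree is finite and the legs are symmetric, it suffices to compare $H(u^1_L)$ with the centralities of three kinds of vertices: the pendant vertices, the vertex $c$, and the inner vertices $u^1_j$ with $0\le j<L$.

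1. A pendant vertex at $u^1_L$ has smaller centrality than its neighbour $u^1_L$, by the remark on trees made before the theorem.

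2. For the vertex $u^1_L$ we have
$$H(u^1_L)\ge N+\frac{(k-1)N}{2L+1},$$
counting only pendant vertices.

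3. For $u^1_j$ with $j<L$, the own-leg pendants contribute $N/(L-j+1)\le N/2$. Each of the other $(k-1)$ groups contributes $N/(L+j+1)$. The remaining $kL+1$ path vertices contribute at most $O(kL)$ in total; a sharper harmonic-series bound is $O(k\log L)$.

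Hence $H(u^1_L)-H(u^1_j)$ is at least
$$N\Bigl(\tfrac12-(k-1)\bigl(\tfrac{1}{L+j+1}-\tfrac{1}{2L+1}\bigr)\Bigr)-O(kL).$$

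\textbf{Main obstacle.} The bracketed coefficient must be positive uniformly in $j$. In the worst case $j=0$ it equals $\tfrac12-(k-1)\frac{L}{(L+1)(2L+1)}$. For large $k$ this is negative unless $L$ is large compared with $k$: roughly, $\tfrac{k-1}{2L}<\tfrac12$ requires $L\ge k$. So I will choose $L\ge\max(k,\lceil d/2\rceil)$ and check that the coefficient is bounded below by a positive constant $\varepsilon(k,L)$ for all $0\le j<L$. If that estimate is too tight, the fallback is to attach the pendants not directly to $u^i_L$ but at distance $2$ through a hub, or to use a larger $L$ such as $L=2k$.

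With $L$ fixed, I will then take $N$ so large that $N\varepsilon$ exceeds the $O(kL)$ path contribution. Then every $u^i_L$ strictly beats every other vertex and all $u^i_L$ tie by symmetry, so ${\mathcal C}_H(T_{k,d})=\{u^1_L,\dots,u^k_L\}$, as required.
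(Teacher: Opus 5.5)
Your proposal is correct and is essentially the paper's proof: the same subdivided $k$-star with many pendants at each leg end, the same symmetry and pendant-versus-neighbour arguments, and the same estimate in which the pendant contributions outweigh the at most $kL$ contribution of the spider (the paper takes $r=\max\{1,\lceil d/2\rceil,\lceil (k-1)/2\rceil\}$ and $M=kr(r+1)(2r+1)+1$). Your cruder bound $N/(L-j+1)\le N/2$ on the own-leg pendants is what forces $L\ge k$ instead of the paper's $2r\ge k-1$. The check you postponed does go through: the bracket is smallest at $j=0$, where it exceeds $\frac12-\frac{k-1}{2L}\ge\frac{1}{2k}$, so no fallback is needed and $N\ge 2k^2L$ suffices.
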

\begin{proof}

Set
\[
    r=\max\left\{
        1,\left\lceil\frac{d}{2}\right\rceil,
        \left\lceil\frac{k-1}{2}\right\rceil
    \right\}
    \qquad\text{and}\qquad
    M=kr(r+1)(2r+1)+1.
\]
Then, $2r\ge d$ and $k-1\le 2r$.

The tree $T_{k,d}$ is constructed as follows: 
take $k$ paths $P_1,\dots, P_k$ of length $r$ and endvertices $v_i,u_i$ for $i = 1,\dots, k$. Identify all vertices $v_i$  (denote the resulting vertex as $c$ and the obtained subdivided $k$-star as $S$) and, for every $i = 1,\dots, k$ add $M$ new pendant vertices to $u_i$.

By symmetry, the harmonic centralities of $u_1,\dots, u_k$ are equal. Let
$x\in V(S)\setminus\{u_1,\ldots,u_k\}$
and $s=d(c,x)$ (note that $0\le s<r$). If $s >0$, then take a (unique) path $P_i$ containing $x$ in $S$; for $s=0$ (thus, $x=c$), take any of these paths.

Each of $M$ pendant neighbors of $u_i$ has distance $r-s+1$ from $x$, and every of the remaining  $(k-1)M$ pendant vertices of $T_{k,d}$ has distance $r+s+1$ from $x$.
Since pendant vertices of $T_{k,d}$ have no influence on distances between vertices of $S$, we obtain that
\[
    H_T(x)=H_S(x)+MF(s),
    \qquad
    F(s)=\frac{1}{r-s+1}+\frac{k-1}{r+s+1}.
\]
Similarly, 
\[
    H_T(u_i)=H_S(u_i)+MF(r).
\]

For $0\le s<r$,
\begin{align*}
    F(r)-F(s)
    &=
    1+\frac{k-1}{2r+1}
    -\frac{1}{r-s+1}
    -\frac{k-1}{r+s+1}\\
    &=
    (r-s)\left(
        \frac{1}{r-s+1}
        -\frac{k-1}{(2r+1)(r+s+1)}
    \right).
\end{align*}
Since $k-1\le 2r$, $r-s+1\le r+1$ and $r+s+1\ge r+1$,
we get
\begin{align*}
    F(r)-F(s)
    &\ge
    (r-s)\left(
        \frac{1}{r+1}
        -\frac{2r}{(2r+1)(r+1)}
    \right)\\
    &=\frac{r-s}{(r+1)(2r+1)}\\
    &\ge\frac{1}{(r+1)(2r+1)}.
\end{align*}

The tree $S$ has $kr+1$ vertices and the summands in the sum of harmonic centrality of a vertex are upper bounded by 1. So, for every $v\in V(S)$, $
    0\le H_S(v)\le kr.
$.
This implies (together with the choice of $M$)
\begin{align*}
    H_T(u_i)-H_T(x)
    &=
    H_S(u_i)-H_S(x)+M\bigl(F(r)-F(s)\bigr)\\
    &\ge
    -kr+\frac{M}{(r+1)(2r+1)}\\
    &=
    \frac{1}{(r+1)(2r+1)}>0.
\end{align*}
Hence, every vertex of $S$ which is distinct from $u_1,\ldots,u_k$
has strictly smaller harmonic centrality than $u_1,\dots, u_k$.

Now, let $w$ be a pendant vertex adjacent with $u_i$.
For every $z\in V(T)\setminus\{u_i,w\}$, we have
$d_T(w,z)=d_T(u_i,z)+1$. Thus
\[
    H_T(u_i)-H_T(w)
    =
    \sum_{z\in V(T)\setminus\{u_i,w\}}
    \left(
        \frac{1}{d_T(u_i,z)}
        -\frac{1}{d_T(u_i,z)+1}
    \right)>0
\]
implying that none of pendant vertices belongs to the harmonic center of $T_{k,d}$. Therefore, in total, ${\mathcal C}_H(T_{k,d}) =
    \{u_1,\ldots,u_k\}$. Furthermore, $
    d_T(u_i,u_j)
    =d_T(u_i,c)+d_T(c,u_j)
    =2r\ge d
$ and  $u_1,\ldots,u_k$ belong to distinct components of $T_{k,d}-c$, so, they do not lie on a single path.
\end{proof}


On the other hand, we obtain a positive localization result in the case of small diameter:

\begin{theorem}
\label{harmon_diam_3} Let $G$ be a graph of diameter at most 3. Then its harmonic center is contained in a single block.
\end{theorem}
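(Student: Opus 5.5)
The plan is to argue by contradiction. Suppose $u,v\in{\mathcal C}_H(G)$ lie in no common block. Then there is a cut-vertex $c\notin\{u,v\}$ separating them. I will show that $H_G(c)>H_G(u)$ or $H_G(c)>H_G(v)$, which contradicts maximality. Let $A$ and $B$ be the components of $G-c$ containing $u$ and $v$, and let $R$ be the union of the remaining components. Every $u$--$v$ path passes through $c$, so $d(u,v)=d(u,c)+d(c,v)\le 3$. Hence, up to symmetry, $(d(u,c),d(v,c))\in\{(1,1),(1,2)\}$. A further key consequence of the diameter bound is that for $w\in A$ we have $d(v,w)=d(v,c)+d(c,w)\le 3$. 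So $d(c,w)\le 2$ on $A$, and $d(c,w)=1$ on $A$ when $d(v,c)=2$; symmetrically on $B$.

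Since $H_G(u)=H_G(v)$, it suffices to prove $2H_G(c)>H_G(u)+H_G(v)$. I expand this vertex by vertex:
$$2H_G(c)-H_G(u)-H_G(v)=\sum_{w}\Bigl(\frac{2}{d(c,w)}-\frac{1}{d(u,w)}-\frac{1}{d(v,w)}\Bigr),$$
with the conventions $1/0$-terms omitted and $w$ ranging over all vertices. The terms for $w\in\{u,v,c\}$ are computed directly. For $w\in R$, both $d(u,w)$ and $d(v,w)$ exceed $d(c,w)$, so the term is positive. For $w\in A$, I use $d(v,w)=d(v,c)+d(c,w)$ and $d(u,w)\ge d(c,w)-d(u,c)$ together with the bounds above. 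Most cases give a nonnegative term. For example, $d(c,w)=1$ gives at least $2-1-\tfrac12>0$. Case $(1,2)$ in fact forces $d(c,w)=1$ throughout $A$ and so works out cleanly.

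The main obstacle is case $(1,1)$ with vertices $w\in A$ satisfying $d(c,w)=2$ and $d(u,w)=1$. These contribute $1-1-\tfrac13=-\tfrac13$ each, and symmetrically in $B$. To compensate I will first try a different weighting: compare $H_G(c)$ with $H_G(u)$ alone, or with a convex combination chosen according to the sizes of the bad sets. I will also exploit the structure these vertices force. Every such $w$ is a neighbour of $u$ not adjacent to $c$, and $u$ itself is a neighbour of $c$ whose contribution $2/1-0-1/2$ is positive. If this is insufficient, I will choose $c$ more cleverly, for instance as the cut-vertex on the $u$--$v$ path adjacent to the block of $u$. I would also consider replacing $c$ by comparing $u$ with $c$ directly via $H(c)-H(u)=\sum_{w\notin A}\bigl(\frac1{d(c,w)}-\frac1{1+d(c,w)}\bigr)-\sum_{w\in A}\bigl(\frac1{d(u,w)}-\frac1{d(c,w)}\bigr)$. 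I expect this counting balance in the $(1,1)$ case to be the crux of the proof.
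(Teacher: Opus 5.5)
Your proposal stops exactly where the proof has to happen, and the route it commits to cannot get there. The termwise expansion of $2H_G(c)-H_G(u)-H_G(v)$ uses only distance constraints, and under those constraints alone the sum can be negative. Take $G-c$ with components $A=\{u,w_1,\dots,w_N\}$ and $B=\{v\}$, and edges $cu$, $cv$, $uw_i$; this has diameter $3$ and is your case $(1,1)$. Then $H(c)=2+\frac N2$, $H(u)=N+\frac32$ and $H(v)=\frac32+\frac N3$, so the sum equals $1-\frac N3<0$ for $N\ge 4$.

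Your claim that case $(1,2)$ ``works out cleanly'' is also wrong for this sum. You checked only the side $A$. On $B$ you merely have $d(c,w)\le 2$, and every neighbour $w$ of $v$ with $d(c,w)=2$ contributes $1-\frac13-1=-\frac13$, so the same obstruction appears there.

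What is missing is not a cleverer weighting or a different choice of $c$. It is the observation that the diameter bound forces one side to lie entirely in $N(c)$. If $a\in A$ and $b\in B$ both had distance at least $2$ from $c$, then $d(a,b)=d(a,c)+d(c,b)\ge 4$. Hence $A\subseteq N(c)$ or $B\subseteq N(c)$. In case $(1,2)$ it is $A$, since $v\in B$ is at distance $2$.

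Suppose, say, $B\subseteq N(c)$, and compare $c$ with $v$ alone:
\begin{itemize}
\item the contribution of $v$ to $H(c)$ equals that of $c$ to $H(v)$;
\item each $w\in B\setminus\{v\}$ has $d(c,w)=1\le d(v,w)$;
\item each $w\notin B\cup\{c\}$ has $d(v,w)=1+d(c,w)$, and such a $w$ exists (e.g.\ $w=u$).
\end{itemize}
So $H(c)>H(v)$, contradicting $v\in{\mathcal C}_H(G)$. Your final formula for $H(c)-H(u)$ is precisely this comparison, but it has the right sign only when applied on the side contained in $N(c)$, which you never establish.

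With this fix your argument is local (any separating cut-vertex works, no case split needed) and shorter than the paper's. The paper first classifies the whole block structure: a universal cut-vertex for diameter $2$, and for diameter $3$ cut-vertices forming a clique with all but one block being end-blocks dominated by their cut-vertex. It then compares $H(u)$ with $H(x)$ through explicit formulas in $\deg(u)$ and $\deg(x)$.
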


\begin{proof} If $G$ consists of a single block, there is nothing to prove. Hence, assume that $G$ has a cut-vertex.Let $n = |V(G)|$ and $N_i(u)$ be the set of vertices of $G$ that have distance $i$ from $u$. If $\operatorname{diam}(G) = 2$ and $G$ is not 2-connected, then $G$ has unique cut-vertex $x$ (otherwise  $G$ contains a cut-vertex $y \neq x$ and, consequently, a path of length at least $1+d(x,y) + 1 \geq 3 $, a contradiction). It also holds that, for every $ v \in V(G)\setminus \{x\}$, $vx \in E(G)$; so $H(x) = n - 1$. For $ v \in V(G) \setminus \{x\}$, we have $  H(v) = \deg(v)\cdot 1 + (n - 1 - \deg(v)) \cdot \frac{1}{2} = \frac{\deg(v)}{2} + \frac{n-1}{2}$. Since $v$ can be adjacent only to vertices from its own block, $\deg(v) < n-1$, which yields $H(v) = \frac{\deg(v)}{2} + \frac{n-1}{2} < \frac{n-1}{2} + \frac{n-1}{2} = n-1 = H(x)$.

If $\operatorname{diam}(G) = 3$ and $G$ is not 2-connected, then all cut-vertices of $G$ induce a clique (otherwise $G$ contains cut-vertices $x\neq y$ and a path of length at least $1 + d(x,y) + 1 \geq 1+2+1 = 4$, a contradiction), hence, they belong to the same block. Further, if $G$ has more than one cut-vertex and $B$ is an end-block of $G$ containing a cut-vertex $x$, then all vertices of $B$ are adjacent to $x$. 

For a vertex $v$ of $G$,
 $$ 
 H(v) = \deg(v) + \frac{1}{2} \cdot |N_2(v)| + \frac{1}{3} \cdot |N_3(v)| =$$
 $$\deg(v) + \frac{1}{2} \cdot |N_2(v)| + \frac{1}{3} \cdot (n - 1 - \deg(v) - |N_2(v)|) =$$ $$  \frac{n - 1}{3} + \frac{2}{3} \cdot \deg(v) + \frac{1}{6} \cdot |N_2(v)|.
 $$ 
 
 Let $x$ be a cut-vertex of $G$; from the argument above, it follows that \(N_3(x) = \emptyset\), thus $$H(x) =  \frac{n - 1}{3} + \frac{2}{3} \cdot \deg(x) + \frac{1}{6} \cdot |N_2(x)| =$$ $$ \frac{n - 1}{3} + \frac{2}{3} \cdot \deg(x) + \frac{1}{6} \cdot (n - 1 - \deg(x)) =$$ $$ \frac{n - 1}{2} + \frac{\deg(x)}{2}$$. 
 
Let $B$ be an end-block of $G$ containing a cut-vertex $x$ such that every vertex of $B\setminus\{x\}$ is adjacent to $x$, and let $u \neq x$ be any vertex of $B$. Then
$$
H(u) = \frac{n - 1}{3} + \frac{2}{3} \cdot \deg(u) + \frac{1}{6} \cdot |N_2(u)| =
$$
$$
\frac{n - 1}{3} + \frac{2}{3} \cdot \deg(u) + \frac{1}{6} \cdot (\deg(x) - \deg(u)) =
$$
$$
\frac{n - 1}{3} + \frac{2}{3} \cdot \deg(u) + \frac{1}{6} \cdot \deg(x) - \frac{1}{6} \cdot \deg(u) =
$$
$$
\frac{n - 1}{3} + \frac{\deg(u)}{2} + \frac{\deg(x)}{6}.
$$
Since $\deg(x) < n$, we have
$\displaystyle \frac{\deg(x)}{6} < \frac{n}{6} < \frac{n -1}{6} + \frac{1}{2}$.
In addition,
\[
\frac{\deg(x)}{2}
=
\frac{\deg(x)}{6} + \frac{\deg(x)}{3}
<
\frac{n-1}{6} + \frac{1}{2} + \frac{\deg(x)}{3}.
\]
Also, $\deg(u) \leq \deg(x)-1$, hence
\[
\frac{\deg(u)}{2}
\leq
\frac{\deg(x)}{2} - \frac{1}{2}
<
\frac{n-1}{6} + \frac{\deg(x)}{3}.
\]
This gives
$$
H(u)
=
\frac{n-1}{3} + \frac{\deg(x)}{6} + \frac{\deg(u)}{2}
<
\frac{n-1}{3} + \frac{\deg(x)}{6}
+ \frac{n-1}{6} + \frac{\deg(x)}{3}
=
$$
$$
\frac{n-1}{2} + \frac{\deg(x)}{2}
=
H(x).
$$

Suppose first that $G$ has more than one cut-vertex. As observed above, all cut-vertices belong to the same block, say $B_0$, and every other block is an end-block whose vertices are all adjacent to its cut-vertex. Hence, by the preceding argument, no vertex outside $B_0$ can have maximum harmonic centrality. Therefore, ${\mathcal C}_H(G)$ is contained in $B_0$.

It remains to consider the case when $G$ has a unique cut-vertex $x$. We claim that at most one block containing $x$ can contain a vertex at distance 2 from $x$. Indeed, if two distinct blocks contained vertices $u$ and $v$, respectively, with
$d(x,u)=d(x,v)=2$, then every $u-v$ path would contain $x$, hence
$d(u,v)=d(u,x)+d(x,v)=4,$
contrary to $\operatorname{diam}(G)=3$.

Now, if no such block exists, then every vertex distinct from $x$ belongs to an end-block whose vertices are all adjacent to $x$. By the preceding argument, $H(u)<H(x)$ for every $u\neq x$, and hence ${\mathcal C}_H(G)=\{x\}$.

Otherwise, let $B^*$ be the unique block containing a vertex at distance 2 from $x$. Every block distinct from $B^*$ has all its vertices adjacent to $x$, and therefore every vertex $u$ outside $B^*$ satisfies $H(u)<H(x)$. Since $x\in V(B^*)$, it follows that every vertex of maximum harmonic centrality belongs to $B^*$. Thus, also in this case, ${\mathcal C}_H(G)$ is contained in a single block.
 \end{proof}

\begin{corollary}\label{cor:uniform-diameter-three}
Every harmonic-uniform graph on at least three vertices with diameter at most 3 is 2-connected.
\end{corollary}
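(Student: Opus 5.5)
The plan is to derive this directly from Theorem~\ref{harmon_diam_3}. Let $G$ be harmonic-uniform with at least three vertices and $\operatorname{diam}(G)\le 3$. Harmonic-uniformity means every vertex of $G$ has the same harmonic centrality, so ${\mathcal C}_H(G)=G$. By Theorem~\ref{harmon_diam_3}, ${\mathcal C}_H(G)$ lies in a single block $B$, and therefore $V(G)=V(B)$. This shows that $G$ consists of one block.

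The remaining step is to check that a single-block graph on at least three vertices is 2-connected. Since $G$ is connected and has at least three vertices, its only block cannot be $K_1$ or $K_2$. Indeed, a connected graph on at least three vertices whose unique block is a bridge would have only two vertices. Hence $B$ is a maximal 2-connected subgraph, and $G=B$ is 2-connected. This is exactly where the hypothesis of at least three vertices is used: $K_2$ is harmonic-uniform with diameter 1 but is not 2-connected.

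An alternative route is to take a cut-vertex $x$ of $G$. Then $G$ has at least two blocks, and the vertices of ${\mathcal C}_H(G)=G$ lying in different blocks contradict Theorem~\ref{harmon_diam_3}. This is the same argument in contrapositive form.

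I do not expect a real obstacle here. The only subtle point is the bookkeeping on ``contained in a single block'', namely that containing all vertices of $G$ forces $G$ to equal that block, together with excluding the degenerate bridge block.
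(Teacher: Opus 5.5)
Your proposal is correct and is exactly the intended derivation. The paper states this corollary without a written proof, as an immediate consequence of Theorem~\ref{harmon_diam_3}: harmonic-uniformity gives ${\mathcal C}_H(G)=G$, so $G$ lies in a single block, and the order condition $|V(G)|\ge 3$ excludes the $K_2$ case, which you handled correctly.
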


\begin{corollary} 
\label{harmon_many_edges} Let $G$ be an $n$-vertex graph of size at least $4+\frac{n(n-5)}{2}$. Then its harmonic center is contained in a single block.
\end{corollary}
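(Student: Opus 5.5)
The plan is to reduce the statement to Theorem \ref{harmon_diam_3}: we show that an $n$-vertex graph with at least $4+\frac{n(n-5)}{2}$ edges either is 2-connected, in which case there is nothing to prove, or has diameter at most $3$. Note first that the bound is non-trivial only when $4+\frac{n(n-5)}{2}\le \binom n2$, i.e.\ $4+\frac{n(n-5)}{2}\le\frac{n(n-1)}{2}$, which always holds. In any case, such a $G$ is missing at most $\binom n2-\frac{n(n-5)}{2}-4=2n-4$ edges of $K_n$. We also have to check connectivity (the paper assumes connected graphs); a disconnected graph with components of sizes $a,n-a$ misses at least $a(n-a)\ge n-1$ edges, so this is compatible with the bound only in degenerate cases. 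We simply work under the standing assumption that $G$ is connected.

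The key step is a lower bound on the number of non-edges of a connected graph of diameter at least $4$ that has a cut-vertex. Suppose $\operatorname{diam}(G)\ge 4$, and take $u,v$ with $d(u,v)=4$ together with the distance layers $N_0(u),\dots,N_{e}(u)$ from $u$, where $e\ge 4$ is the eccentricity of $u$. Non-edges are forced between any two vertices whose layers differ by at least $2$. Let the layer sizes be $a_0=1,a_1,\dots,a_e$ with all $a_i\ge1$ and $\sum a_i=n$. Then the number of non-edges is at least $\sum_{|i-j|\ge2}a_ia_j$. With $e=4$ and $a_0=a_4=1$, this sum equals $a_2+a_3+1+a_1+a_2+a_1a_3+\dots$. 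We minimise it subject to $\sum a_i=n$; the minimum is achieved by concentrating the mass in one or two consecutive middle layers. A careful count should give at least $2n-3$ missing edges, which exceeds $2n-4$, a contradiction.

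Hence $\operatorname{diam}(G)\le 3$, and Theorem \ref{harmon_diam_3} applies. Observe that the cut-vertex hypothesis is not even needed here, since the layer argument works for any graph of diameter at least $4$.

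The main obstacle is the optimisation showing that the missing-edge count is at least $2n-3$. The cleanest route is to write $Q=\sum_{|i-j|\ge2}a_ia_j$ for the $e=4$ case and treat larger $e$ by merging layers, since merging only decreases the count. One then verifies that shifting one vertex into layer $2$ never increases $Q$. This reduces to the extremal configuration $(1,1,n-3,1,1)$, which has $1\cdot(n-3)+1+1+(n-3)+1+1=2n-2$ forced non-edges, and other concentrations give similar values. We expect the true minimum to be about $2n-3$ or $2n-2$, which beats $2n-4$. If this bound turns out too weak for small $n$, we would handle small $n$ ($n\le 5$) directly: there the edge hypothesis forces $G$ to be nearly complete, hence of diameter at most $2$ or $3$.
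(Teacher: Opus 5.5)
\textbf{There is a genuine gap: your key claim is false in the equality case.} The reduction to Theorem~\ref{harmon_diam_3} breaks down exactly at the boundary of the hypothesis, because $4+\frac{n(n-5)}{2}$ edges do \emph{not} force diameter at most $3$.

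Redo your own layer count for $e=4$ with sizes $(1,a_1,a_2,a_3,1)$ and $a_1+a_2+a_3=n-2$. The number of forced non-edges is
\[
a_2+a_3+1+a_1a_3+a_1+a_2 \;=\; 2n-4+(a_1-1)(a_3-1),
\]
so the minimum is $2n-4$, not $2n-3$. Your ``extremal configuration'' $(1,1,n-3,1,1)$ has $n+1$ vertices. The correct one, $(1,1,n-4,1,1)$, gives exactly $2n-4$. That is precisely the number of edges $G$ is allowed to miss. Hence diameter-$4$ graphs satisfy the hypothesis with equality for every $n\ge 5$. Examples are $P_5$, or $K_{n-2}$ minus an edge $ab$ with one pendant vertex attached to $a$ and one to $b$. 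These are Ore's extremal graphs. This is not a small-$n$ artifact, so your fallback does not cover it, and Theorem~\ref{harmon_diam_3} cannot be applied to them. As written, your argument only proves the statement for graphs with strictly more than $4+\frac{n(n-5)}{2}$ edges.

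\textbf{What is missing is a separate treatment of equality.} The paper does this by citing Ore's bound, which gives diameter at most $3$ under strict inequality, together with Ore's characterization of the $n$-vertex diameter-$4$ graphs with exactly $4+\frac{n(n-5)}{2}$ edges. It then compares harmonic centralities on those graphs directly.

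Your layer argument can actually supply the characterization itself. Equality forces the following:
\begin{itemize}
\item $e=4$, since for $e\ge 5$ merging the last two layers loses at least $a_{e-2}a_e\ge 1$ forced non-edges.
\item A single vertex in the last layer, since $t$ extra vertices there add $ta_2$ further forced non-edges.
\item $a_1=1$ or $a_3=1$.
\item Every pair of vertices in the same or in consecutive layers is adjacent.
\end{itemize}
So, up to reversing the layering, the root is pendant at the unique vertex $w$ of the first layer, the layers are cliques, and consecutive layers are completely joined.

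You must then still check the harmonic centralities. A pendant vertex has smaller centrality than its neighbour. If the second layer is a single vertex $a$, then $w$ is itself a cut-vertex lying only in bridges, and
\[
H(a)-H(w)=\tfrac{n-4}{2}-\tfrac13>0 .
\]
This keeps the center inside the unique block with at least three vertices, or equal to the middle vertex for $P_5$. Without this step the corollary is not proved.
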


\begin{proof} By \cite{Ore}, the maximum number of edges of an $n$-vertex graph of diameter  $d$ is \( d+ \frac{(n-d-1)  (n-d+4)}{2}\) (note that $n \geq d+1$). For $d=4$, we obtain that if $G$ has more than  \( 4 + \frac{n (n-5)}{2}\) edges, its diameter is at most 3; by Theorem \ref{harmon_diam_3}, the harmonic center of $G$ is then contained in a single block. The paper \cite{Ore} contains also the characterization of $n$-vertex graphs of diameter 4 with maximum number of edges (which is $4 + \frac{n (n-5)}{2}$): such a graph is either obtained from $K_{n-2}$ by deleting an edge $ab$ and adding a new path $avu$, or is formed as follows: for some integer $x$ with $1 \leq x \leq n-4$, take the join of $K_x$ and $K_{n-3-x}$, add a new vertex $z$ which is joined to all vertices of $K_{n-3-x}$, a new vertex $u$ which is joined to all vertices of $K_x$, and a new vertex $v$ which is joined only to $u$.

For the former graph, $H(u) = 1 + \frac{1}{2} + \frac{n-4}{3}+\frac{1}{4} = \frac{7}{4}+\frac{n-4}{3},H(v) = 2\cdot 1 + \frac{n-4}{2} + \frac{1}{3} = \frac{7}{3} + \frac{n-4}{2}, H(a)= n-3+2\cdot\frac{1}{2} = n-2$.
Hence, for  \( n \geq 5\), we have that  \(H(a) > H(v) > H(u)\), so the harmonic center is contained in the block which has $n-2$ vertices. For the latter graph the statement follows from the fact that $v$ is a pendant vertex (so \(H(v) < H(u)\)) and all other vertices form the second block. 
\end{proof}

To address the quality of the lower bound in Corollary \ref{harmon_many_edges}, we describe, for every \(n\geq7\), the construction of an $n$-vertex graph with \( \frac{n^2}{4} - O(n)\) edges, which has delocalized harmonic center. For odd $n = 2k+1$, take two copies of $K_k$, pick a vertex $u$ in one copy and a vertex $u'$ in the other, and add a new path $uvu'$. The obtained graph has $n$ vertices, $2{k \choose 2} + 2 = \frac{n-1}{2}\left (\frac{n-1}{2}-1 \right ) + 2 = \frac{n^2}{4}-n+\frac{11}{4}$ edges and three cut-vertices $u,v,u'$; by symmetry, $H(u) = H(u')$.

Moreover, if $w\neq u$ is a vertex of the copy of $K_k$ containing $u$, then $u$ and $w$ have the same total contribution to their harmonic centralities from the vertices of this copy, while every vertex outside this copy is farther from $w$ than from $u$. Hence, $H(w)<H(u)$; by symmetry, the analogous statement holds for the other copy of $K_k$. Thus, it remains to compare $H(u)$ with $H(v)$.

We have \(H(v) = 2 + \frac{1}{2}\cdot2(k-1) = k+1\) and \(H(u) = k + \frac{1}{2} + \frac{1}{3}\cdot(k-1) = \frac{4}{3} k + \frac{1}{6}\). From $n \geq 7$, we have $k \geq 3$, which implies \(\frac{4}{3} k + \frac{1}{6} \geq k+1\), yielding $H(u) \geq H(v)$. For $n$ even, connect vertices $u,u'$ from distinct copies of $K_k$ by a new path $uvv'u'$. As above, every vertex of either copy of $K_k$ distinct from $u$ or $u'$, respectively, has smaller harmonic centrality than $u$ or $u'$. By symmetry, $H(u)=H(u')$ and $H(v)=H(v')$, so it is again enough to compare $H(u)$ with $H(v)$.
We have \(H(v) = 2 + \frac{1}{2}\cdot(k-1+1) + \frac{1}{3}\cdot(k-1) =  \frac{5}{6}k+ \frac{5}{3}\) and \(H(u) = k + \frac{1}{2} + \frac{1}{3} + \frac{1}{4}(k-1)=\frac{5}{4}k+\frac{7}{12}\); hence, $H(u) \geq H(v)$ if and only if  \(\frac{5}{4}k+\frac{7}{12} \geq \frac{5}{6}k+ \frac{5}{3}\), which holds for  \(k \geq 3\). The obtained graph has $n = 2k+2 \geq 8$ vertices and \(2{k \choose 2} + 3 = \frac{n-2}{2}\cdot \left (\frac{n-2}{2}-1\right ) + 3 = \frac{n^2}{4} - \frac{3n}{2} + 5\) edges.

\bigskip
 The Figure 1 shows all 168 graphs on 5--10 vertices which are harmonic uniform. They were obtained by exhaustive computer search over all non-isomorphic connected graphs whose collection is available at the graph data webpage  \texttt{https://users.cecs.anu.edu.au/\textasciitilde bdm/data/graphs.html} of \linebreak Brendan McKay; the harmonic centralities of vertices in these graphs were calculated using {\it HarmonicCentrality(..)} procedure of Maple 2020 computer algebra system. All of them are regular, 2-connected and almost all (except $C_8,C_9$ and $C_{10}$) have diameter at most 3. In fact, small diameter has further non-trivial implications on the structure of these graphs:

\begin{figure}
\includegraphics[width=5.4in]{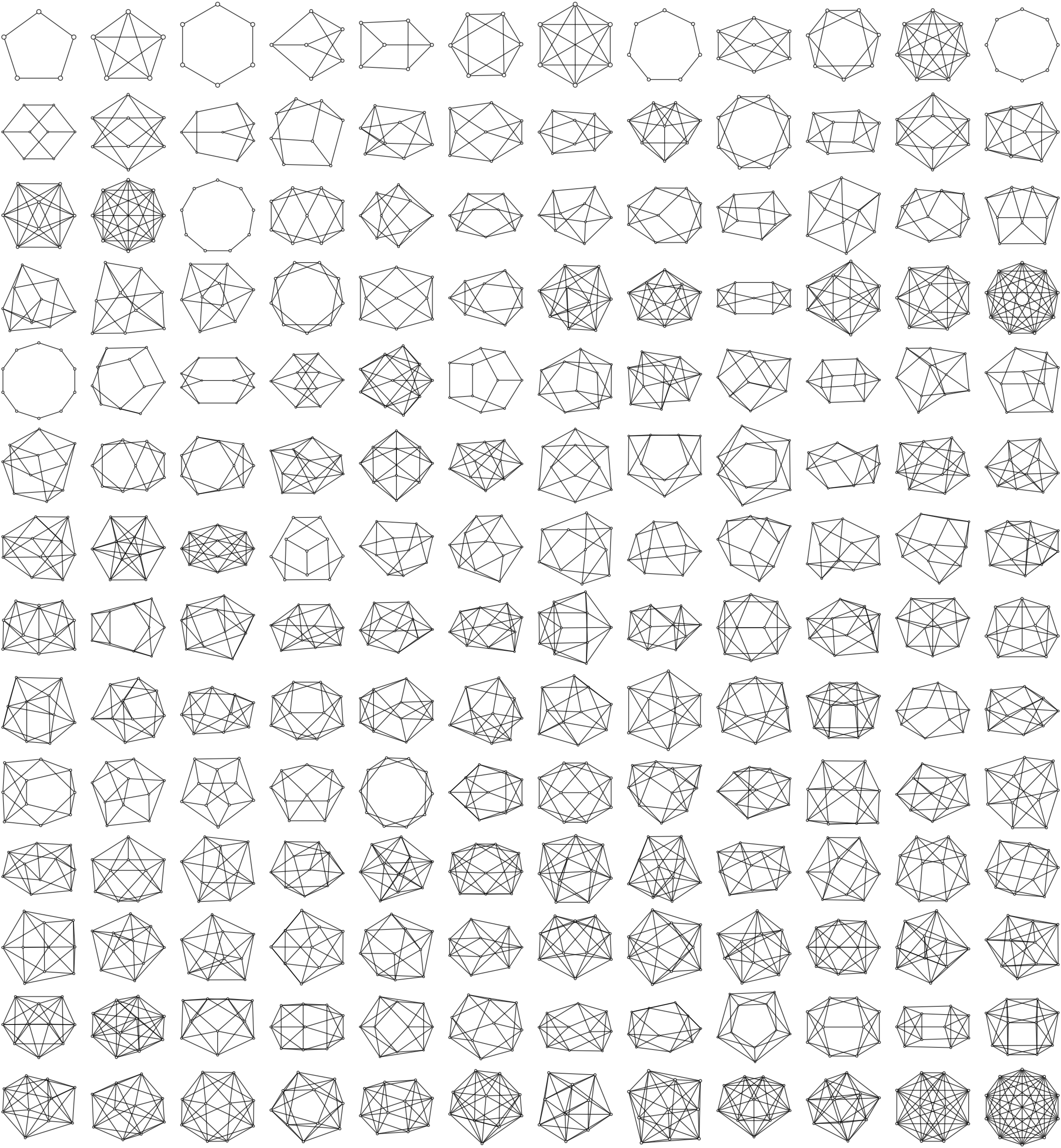}
\caption{All harmonic-uniform graphs on 5--10 vertices}
\end{figure}

\begin{theorem}\label{thm:uniform-at-most-one-cutvertex}
Let $G$ be a harmonic-uniform graph with $\operatorname{diam}(G)\leq4$.
Then $G$ contains at most one cut-vertex.
\end{theorem}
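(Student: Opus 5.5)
The approach is by contradiction. I assume that $G$ is harmonic-uniform with $\operatorname{diam}(G)\le 4$ and that $G$ has two distinct cut-vertices $x,y$. I then look for a vertex $u\neq x$ with $H(u)<H(x)$, which contradicts uniformity. The comparison is the same "closer to everything" argument used for end-blocks in the proof of Theorem~\ref{harmon_diam_3} and for pendant vertices of trees.

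\textbf{Step 1: short pendant parts.} Since $x$ is a cut-vertex, $G-x$ has a component $C_x$ not containing $y$. Similarly, $G-y$ has a component $C_y$ not containing $x$. Let $p=\max_{a\in C_x} d(a,x)$ and $q=\max_{b\in C_y} d(b,y)$. Take $a\in C_x$ with $d(a,x)=p$ and $b\in C_y$ with $d(b,y)=q$. Every $a$--$b$ path passes through $x$ and then through $y$ (since $C_x$ and $C_y$ lie on opposite sides of $x$ and $y$). Hence
\[
4\ \ge\ d(a,b)\ =\ p+d(x,y)+q\ \ge\ p+1+q.
\]
Because $p,q\ge 1$, we get $\min\{p,q\}=1$. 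By symmetry I assume $p=1$, so $C_x\subseteq N(x)$.

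\textbf{Step 2: comparing $u$ with $x$.} Fix $u\in C_x$. All neighbours of $u$ lie in $C_x\cup\{x\}$. I compare the contribution of each vertex $z\notin\{u,x\}$ to $H(u)$ and to $H(x)$; the mutual term $1/d(u,x)=1$ appears in both sums.
\begin{itemize}
\item If $z\in C_x$, then $d(x,z)=1\le d(u,z)$.
\item If $z\notin C_x\cup\{x\}$, then every $u$--$z$ path passes through $x$, so $d(u,z)=1+d(x,z)>d(x,z)$.
\end{itemize}
The second class is nonempty, since it contains $y$. Therefore $H(u)<H(x)$, which contradicts the harmonic uniformity of $G$.

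\textbf{Expected obstacle.} The only delicate point is the path-counting inequality in Step 1. One must check that the chosen components $C_x$ and $C_y$ really force every $a$--$b$ path through both $x$ and $y$. This holds because $y\notin C_x$ and $x\notin C_y$, so $C_x$ and $C_y$ are disjoint and separated by $x$ and by $y$. Once this is verified, the rest is the routine comparison in Step 2.
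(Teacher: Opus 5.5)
Your proof is correct and follows essentially the same route as the paper. Both arguments bound the depths of two pendant parts via $p+1+q\le d(a,b)\le 4$, which forces one depth to equal $1$, and then show that the corresponding cut-vertex has strictly larger harmonic centrality than every vertex of that part. The only difference is cosmetic: you work with components of $G-x$ and $G-y$ rather than the paper's end-blocks $B_1,B_2$, which spares you from arguing that two end-blocks with distinct cut-vertices exist.
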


\begin{proof}
By contradiction. Let $G$ be a harmonic-uniform graph of diameter at most 4 that contains at least two cut-vertices; we can select two distinct cut-vertices $c_1,c_2$ from two end-blocks $B_1,B_2$ of $G$. For $i \in \{1,2\}$, set 
$r_i=\max\left\{ d_G(c_i,v):\ v\in V(B_i)\setminus\{c_i\} \right\}$
and take the vertex $u_i\in V(B_i)\setminus\{c_i\}$ such that
$d_G(c_i,u_i)=r_i$. Every path from $u_1$ to $u_2$ contains both $c_1$ and $c_2$, thus,
$r_1 + r_2 + 1 \leq
r_1+d_G(c_1,c_2)+r_2 = d_G(u_1,u_2) \leq 4$. This implies that one of $r_1, r_2$ -- say, $r_1$ -- equals 1, and so $c_1$ is adjacent to every other vertex from $V(B_1)$. 
Taking any vertex $u \in V(B_1) \setminus \{c_1\}$ and a vertex $z \in V(G) \setminus V(B_1)$ (note that since $G$ has a cut-vertex distinct from $c_1$, $z$ exists), every $u-z$-path contains $c_1$, so $d(u,z) = 1+d(c_1,z)$ and $\frac{1}{d(c_1,z)} > \frac{1}{d(u,z)}$. For $z' \in V(B_1) \setminus \{c_1,u\}$, $d(c_1,z') = 1 \leq d(u,z')$; this yields that the contribution of $z'$ to $H(c_1)$ is not smaller than to $H(u)$ while the contribution of $z$ to $H(c_1)$ is larger than the contribution of $z$ to $H(u)$.
Hence, 
$
H_G(c_1)>H_G(u)
$, a contradiction with harmonic uniformity of $G$.
\end{proof}

Before describing the construction of harmonic-uniform graphs of diameter 4 and  vertex connectivity 1, we state an auxiliary result on harmonic uniformity of specific graphs with single cut-vertex and two identical blocks:

\begin{proposition}\label{prop:general-two-layer-construction}
Let $x_1,x_2,k,\ell,m$ be positive integers, $R$ be a $k$-regular graph of order $x_1$ and $Q$ be a bipartite graph with parts $A,B$ such that $|A|=x_1,|B|=x_2$ and every vertex of $A$ (resp. $B$) has degree $\ell$ (resp. $m$). Let $F$ be a graph consisting of two identical blocks ${\mathcal B}_1,{\mathcal B}_2$ with common cut-vertex $c$ based on two copies $Q_1,Q_2$ of $Q$ such that, for $i \in \{1,2\}$,
\begin{itemize}
    \item $F[A_i] \cong R$,
    \item $F[B_i] \cong K_{x_2}$,
    \item $c$ is adjacent to every vertex of $A_i$.
\end{itemize}

\noindent
If 
\begin{align}
2x_1+x_2
&=
(1+k+\ell)
+\frac{x_1-k-1}{2}
+\frac{x_2-\ell}{2}
+\frac{x_1}{2}
+\frac{x_2}{3},
\label{eq:two-layer-A}\\
2x_1+x_2
&=
(x_2-1+m)
+\frac{x_1-m}{2}
+\frac12
+\frac{x_1}{3}
+\frac{x_2}{4},
\label{eq:two-layer-B}
\end{align}
then $F$ is harmonic-uniform.
\end{proposition}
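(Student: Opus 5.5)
Each block $\mathcal B_i$ has $x_1+x_2+1$ vertices: $c$, the vertices of $A_i$ and those of $B_i$. Hence $|V(F)|=2x_1+2x_2+1$. The plan is to compute the harmonic centrality of the three kinds of vertices ($c$, a vertex of $A_i$, a vertex of $B_i$) by listing the distance to every other vertex. We then show all three values equal $2x_1+x_2$, the common left-hand side of \eqref{eq:two-layer-A} and \eqref{eq:two-layer-B}. By the symmetry swapping the two blocks and the regularity assumptions, every vertex of $A_1\cup A_2$ has the same centrality, and likewise every vertex of $B_1\cup B_2$. So it suffices to treat $c$, one $a\in A_1$ and one $b\in B_1$.

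\textbf{The cut-vertex $c$.} It is adjacent to all $2x_1$ vertices of $A_1\cup A_2$. Every vertex of $B_i$ has $m\ge 1$ neighbours in $A_i$, so it lies at distance $2$ from $c$. This gives $H_F(c)=2x_1+\frac{2x_2}{2}=2x_1+x_2$, which is exactly the target value.

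\textbf{A vertex $a\in A_1$.} Inside $\mathcal B_1$ the contributions are as follows.
\begin{itemize}
\item Distance $1$: $c$, its $k$ neighbours in $R$, and its $\ell$ neighbours in $B_1$, contributing $1+k+\ell$.
\item Distance $2$: the remaining $x_1-k-1$ vertices of $A_1$, reached through $c$.
\item Distance $2$: the remaining $x_2-\ell$ vertices of $B_1$, reached through a $B_1$-neighbour of $a$ (here $\ell\ge1$ and $B_1$ is a clique).
\end{itemize}
In $\mathcal B_2$, every $A_2$-vertex is at distance $2$ via $c$, and every $B_2$-vertex is at distance $3$. The latter holds because any path must pass through $c$ and such a vertex is at distance $2$ from $c$. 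Summing gives the right-hand side of \eqref{eq:two-layer-A}.

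\textbf{A vertex $b\in B_1$.} Inside $\mathcal B_1$ we have the following.
\begin{itemize}
\item Distance $1$: the $x_2-1$ other clique vertices and its $m$ neighbours in $A_1$.
\item Distance $2$: the other $x_1-m$ vertices of $A_1$, and $c$.
\end{itemize}
This gives $x_2-1+m+\frac{x_1-m}{2}+\frac12$. Through $c$ (at distance $2$), the vertices of $A_2$ lie at distance $3$ and those of $B_2$ at distance $4$. Summing gives the right-hand side of \eqref{eq:two-layer-B}.

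\textbf{Main obstacle.} The only delicate step is justifying the distance-$2$ claims exactly, not merely as upper bounds.
\begin{itemize}
\item For $a\in A_1$ and $a'\in A_1$ nonadjacent in $R$, the vertices are distinct and nonadjacent, and the common neighbour $c$ gives distance $2$.
\item For $b'\in B_1$ not adjacent to $a$, the path $a\,b''\,b'$ through an $\ell$-neighbour $b''$ gives distance $2$.
\item For $b\in B_1$ and $a'\in A_1$ nonadjacent, the path $b\,b'\,a'$ uses a $B_1$-neighbour $b'$ of $a'$, which exists since $\ell\ge1$. The neighbours $b'$ of $a'$ are distinct from $b$, since $a'$ is not adjacent to $b$.
\end{itemize}
It should also be noted that the graph is well defined: the degree counts require $x_1\ell=x_2m$, and $\ell\ge1$, $m\ge1$ ensure connectivity and that $c$ is a cut-vertex. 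With these distances verified, the hypotheses \eqref{eq:two-layer-A} and \eqref{eq:two-layer-B} say precisely that $H_F(a)=H_F(b)=H_F(c)$, so $F$ is harmonic-uniform.
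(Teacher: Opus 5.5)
Your proposal is correct and follows essentially the same route as the paper: compute the distance profile of $c$, of a vertex $a\in A_i$ and of a vertex $b\in B_i$ directly, and read off that \eqref{eq:two-layer-A} and \eqref{eq:two-layer-B} force $H_F(a)=H_F(b)=H_F(c)=2x_1+x_2$. You also justify why the distance-$2$ claims are exact, which the paper only asserts. The least obvious of these is $d(b,a')=2$ for a non-neighbour $a'\in A_1$ of $b$, which you obtain through the clique $B_1$ using $\ell\ge 1$.
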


\begin{proof}
All $2x_1$ vertices of $A_1\cup A_2$ are adjacent to $c$ while $2x_2$ vertices of $B_1\cup B_2$
have, from $c$, the distance $2$. Thus $
H_F(c)=2x_1+x_2$.

Let $a\in A_i$ and $j=3-i$. Then $c$ is adjacent to $a$ as well as $k$ neighbors of $a$ in $A_i$ and $\ell$ neighbors of $a$ in $B_i$. The remaining vertices of $A_i$ and $B_i$, and all vertices of $A_j$ have distance 2 from $a$, and all vertices of $B_j$ have distance 3 from $a$. Hence,

\[
H_F(a)
=
(1+k+\ell)
+\frac{x_1-k-1}{2}
+\frac{x_2-\ell}{2}
+\frac{x_1}{2}
+\frac{x_2}{3}.
\]

Similarly, for $b\in B_i$, the remaining $x_2-1$ vertices of $B_i$ are adjacent to $b$ as well as its $m$ neighbors in  $A_i$. The remaining neighbors of $A_i$ and the vertex $c$ have distance 2 from $b$, all vertices of $A_j$ have distance  $3$ from $b$ while the distance of all vertices of $B_j$
from $b$ is $4$. Therefore,
\[
H_F(b)
=
(x_2-1+m)
+\frac{x_1-m}{2}
+\frac12
+\frac{x_1}{3}
+\frac{x_2}{4}.
\]

The conditions \eqref{eq:two-layer-A} and \eqref{eq:two-layer-B} imply that
\[
H_F(c)=H_F(a)=H_F(b).
\]
So $F$ is harmonic-uniform.
\end{proof}

\begin{theorem}\label{thm:infinite-uniform-family}
For every integer $t\geq2$ there exists a non-regular  harmonic-uniform graph
$F_t$ with single cut-vertex such that $
|V(F_t)|=48t+1$ and $
\operatorname{diam}(F_t)=4$.
\end{theorem}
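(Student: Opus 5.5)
The plan is to apply Proposition~\ref{prop:general-two-layer-construction} with parameters chosen as linear functions of $t$. Then $|V(F_t)| = 2(x_1+x_2)+1$, so we need $x_1+x_2 = 24t$.

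\textbf{Step 1: simplify the two conditions.} Collecting terms, \eqref{eq:two-layer-A} is equivalent to
\[
k+\ell = 2x_1+\frac{x_2}{3}-1,
\]
and \eqref{eq:two-layer-B} is equivalent to
\[
m = \frac{7x_1}{3}-\frac{x_2}{2}+1 .
\]
A bipartite graph with part degrees $\ell$ and $m$ must also satisfy the edge-count condition $x_1\ell = x_2 m$.

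\textbf{Step 2: choose the parameters.} Try $x_1 = 6t$ and $x_2 = 18t$. This gives $m = 14t-9t+1 = 5t+1$. The edge count then forces $\ell = x_2m/x_1 = 3(5t+1) = 15t+3$. Hence $k = 18t-1-\ell = 3t-4$.

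\textbf{Step 3: check feasibility for $t\ge 2$.}
\begin{itemize}
\item Positivity: $k = 3t-4 \geq 2$, so $k,\ell,m$ are positive integers.
\item Degree bounds: $k \leq x_1-1$, $\ell = 15t+3 \leq 18t = x_2$, and $m = 5t+1 \leq 6t = x_1$.
\item Existence of $R$: a $k$-regular graph on $x_1 = 6t$ vertices exists because $x_1$ is even and $k<x_1$. For example, take a circulant graph.
\item Existence of $Q$: a biregular bipartite graph with part sizes $6t$ and $18t$ and degrees $15t+3$ and $5t+1$ exists, since the edge counts agree ($90t^2+18t$ on both sides) and each degree is at most the size of the opposite part. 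A concrete choice is a circulant-type construction: index $B$ by $\mathbb{Z}_{18t}$ and $A$ by $\mathbb{Z}_{6t}$, and join $a$ to $15t+3$ consecutive residues starting at $3a$. Equivalently, one can invoke the Gale--Ryser theorem.
\end{itemize}
Proposition~\ref{prop:general-two-layer-construction} then gives that $F_t$ is harmonic-uniform, and $|V(F_t)| = 2\cdot 24t+1 = 48t+1$.

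\textbf{Step 4: the remaining properties.}
\begin{itemize}
\item \emph{Single cut-vertex.} Each block $\mathcal{B}_i$ is 2-connected. Removing $c$ leaves $A_i\cup B_i$ connected, because $B_i$ is a clique and every vertex of $A_i$ has $\ell\ge1$ neighbours in $B_i$. Removing a vertex other than $c$ keeps the block connected, via $c$ and the clique $B_i$. Hence $c$ is the only cut-vertex.
\item \emph{Diameter.} Two vertices of $B_1$ and $B_2$ are at distance $4$, and by the distance computation in the proof of Proposition~\ref{prop:general-two-layer-construction} no pair is farther apart. So $\operatorname{diam}(F_t)=4$.
\item \emph{Non-regularity.} We have $\deg c = 2x_1 = 12t$, while $\deg a = 1+k+\ell = 18t$ for $a\in A_i$. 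These differ, so $F_t$ is not regular.
\end{itemize}

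The main obstacle is Step 2: finding an integral solution of the two linear conditions together with $x_1\ell = x_2m$ and the degree bounds. Fixing the ratio $x_1:x_2 = 1:3$ resolves it. A secondary point is giving an explicit biregular bipartite graph $Q$, which the circulant or Gale--Ryser argument in Step 3 handles.
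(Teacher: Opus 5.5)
Your proposal is correct and follows the paper's proof. You use the same parameters $x_1=6t$, $x_2=18t$, $k=3t-4$, $\ell=15t+3$, $m=5t+1$, plug them into Proposition~\ref{prop:general-two-layer-construction}, and take a circulant $R$ and a circulant-type biregular $Q$. Your windows $[3a,3a+15t+2]$ in $\mathbb{Z}_{18t}$ are exactly the paper's adjacency $q-j\in\{0,\dots,5t\}$ under the relabelling $b_{r,q}\mapsto 3q+r$, and your derivation of the parameters and the more explicit block and diameter checks add detail without changing the route.
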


\begin{proof}
Fix $t\geq2$ and put
\[
x_1=6t,\qquad x_2=18t,
\]
\[
k=3t-4,\qquad \ell=15t+3,\qquad m=5t+1.
\]

First, consider a $(3t-4)$-regular circulant graph on $6t$ vertices. It may be constructed as follows: for $3t-4$ even, index its vertices by elements of
$\mathbb{Z}_{6t}$ and connect the vertex $j$ with vertices
$
j\pm1,\dots,j\pm\frac{3t-4}{2}.
$
; for  $3t-4$ odd, connect $j$ with
$
j\pm1,\dots,j\pm\frac{3t-5}{2}
$
and, in addition, with its antipodal vertex $j+3t$ (all indices are taken modulo
$6t$).
Let $A_i,i\in\{1,2\}$ be vertex sets of two copies of this circulant.

Let
\[
B_i=
\left\{
b_{r,q}:
r\in\{0,1,2\},\
q\in\mathbb{Z}_{6t}
\right\}.
\]
be a vertex set of a copy of the complete graph $K_{18t}$.
Now, connect a vertex $a_j\in A_i$ with $b_{r,q}\in B_i$ if and only if
$
q-j\in\{0,1,\dots,5t\}
\pmod{6t}.
$. Finally, let $F_t$ be obtained by connecting every vertex from $A_1 \cup A_2$ to a new vertex $c$.

Every vertex from $A_i$ has exactly $3(5t+1)=15t+3$ neighbors while every vertex from  $B_i$ has exactly $5t+1$ neighbors in $A_i$. Hence $F_t$ satisfies the conditions of Proposition  \ref{prop:general-two-layer-construction} and the harmonic centrality of each of its vertices equals $30t$. Furthermore, 
$
|V(F_t)|
=
1+2x_1+2x_2
=
48t+1
$ and degrees of $F_t$ are $12t$ (for $c$), $1+k+\ell = 18t$ (for vertices from $A_1 \cup A_2$)  and 
$x_2-1+m=23t$ (for $b\in B_1\cup B_2$), respectively. Also, no vertex from $A_1 \cup A_2$ and from $B_1 \cup B_2$ is a cut-vertex, and the vertices from these sets have distance at most $2$ from the cut-vertex $c$ (equality holds for vertices from $B_i$). So, $\operatorname{diam}(F_t)=4$.
\end{proof}

\bigskip
For $t=2$, we obtain that
$
x_1=12,
x_2=36,
k=2,
\ell=33,
m=11$; thus $F_2$ has $97$ vertices. 

\bigskip
By help of AI-driven approach, developing the above constructions with chatGPT, we present an infinite family $\{F'_t\}_{t=0}^\infty$ of harmonic-uniform graphs of diameter 5 that contain a unique cut-edge. Set $n = 30t+10,k = 22t + 5$ and $m=23t+10$. Then take a $k$-regular circulant graph on the set $A_i, i \in \{1,2\}$ of $n$ vertices (since $n$ is even and $k <n$, such a circulant graph exists), a complete graph $K_n$ on the vertex set $B_i$ and add new edges between $A_i$ and $B_i$ to form an $m$-regular bipartite graph with parts $A_i,B_i$ (again, this can be done since $m \leq n$). Finally, take a new vertex $c_i$, connect it with all vertices from $A_i$ and add the edge $c_1 c_2$.

The resulting graph $F'_t$ has diameter $d=5$ and $c_1 c_2$ is its unique cut-edge. Further, the distance profile (that is, $d$-tuple whose $j$-th entry is the number of vertices of distance $j$ from a selected vertex) of $c_i, a \in A_i$ and $b \in B_i$ is $(n+1,2n,n,0,0),(1+k+m,2n-k-m,n,n,0)$ and $(n-1+m,n-m+1,1,n,n)$, respectively. We then obtain

$$
H(c_i)=n+1 + \frac{1}{2} \cdot 2n + \frac{1}{3}\cdot n = 2n + 1 + \frac{n}{3} = 70t + \frac{73}{3},
$$

$$
H(a) = 1 +k+m + \frac{1}{2}\cdot (2n-k-m) + \frac{1}{3} \cdot n + \frac{1}{4}\cdot n = 70t + \frac{73}{3},
$$

$$
H(b) = n-1+m + \frac{1}{2}\cdot (n-m+1) + \frac{1}{3}\cdot 1 + \frac{1}{4}\cdot n + \frac{1}{5}\cdot n = 70t + \frac{73}{3}.
$$
Thus, $F'_t$ is harmonic-uniform. The graph $F'_0$ has 42 vertices; so far, it is the smallest known harmonic-uniform graph which is not 2-connected.

The above constructions are based on gluing two identical blocks, so the resulting graphs possess non-trivial automorphism. On the other hand, it is also possible to construct harmonic-uniform graphs (albeit not of vertex connectivity one) with arbitrary automorphism groups: by \cite{Phelps}, for any finite group $A$, there exists a strongly regular graph whose automorphism group is isomorphic to $A$. Since every connected non-complete strongly regular graph has diameter $2$, the distance profiles (and, consequently, harmonic centralities) of all its vertices are the same.

\section{Concluding remarks}

Using AI-driven approach by chatGPT, it is possible to construct also harmonic-uni\-form graphs with diameter greater than 6 that contain a single cut-edge; the fact that such graphs are very large, together with the observation that all known harmonic-uniform graphs have at most two cut-vertices, raises several open questions:

\begin{problem}
Given a block-graph $G$ (that is, a graph whose every block is a clique), does there exist a harmonic-uniform graph $G'$ such that the intersection graph of the blocks of $G'$ is isomorphic to $G$?
\end{problem}

\begin{problem}
Let $c(n)$ be the maximum number of cut-vertices in a harmonic-uniform graph on $n$ vertices. Determine $c(n)$, or at least its asymptotic order of growth. Is $c(n) = o(n)$?
\end{problem}

\medskip\noindent
{\bf Acknowledgement. } This research was supported by the Slovak Research and Development Agency under the Contract No. APVV-23-0191.






\bibliographystyle{elsarticle-num-names-alphsort}



\end{document}